\newcommand{\N}{\mathbb{N}}
\newcommand{\R}{\mathbb{R}}
\newcommand{\C}{\mathbb{C}}
\renewcommand{\phi}{\varphi}
\let\le\leqslant
\DeclareMathOperator{\supp}{supp}
\let\span\relax
\DeclareMathOperator{\span}{span}
\DeclareMathOperator{\Ran}{Ran}
\DeclarePairedDelimiter{\parens}{\lparen}{\rparen}
\let\braces\relax
\DeclarePairedDelimiter{\braces}{\{}{\}}
\newtheorem*{Th*}{Theorem}
\newtheorem{Th}{Theorem}
\newtheorem*{St*}{Statement}
\newtheorem*{Lm*}{Lemma}
\newtheorem{Lm}{Lemma}
\newtheorem*{Cr*}{Corollary}
\newdefinition{Df}{Definition}
\newtheorem*{Xr*}{Exercise}
\newdefinition{Rm}{Remark}
\newtheorem*{Cn*}{Conjecture}
\journal{Journal of Functional Analysis}
\begin{document}

\begin{frontmatter}

\title{On a conjecture of de Branges}

\author{Igor Bereza\corref{cor1}}
\affiliation{
    organization={Department of Mathematics and Computer Science, St. Petersburg State University},
    addressline={7/9 Universitetskaya nab.},
    city={St. Petersburg},
    postcode={199034},
    country={Russia}
}

\begin{abstract}
We provide a broad class of counterexamples to a conjecture of L. de Branges concerning the superfluity of the continuity property in the axiomatic description of de Branges spaces.
\end{abstract}

\begin{keyword}
    de Branges spaces, reproducing kernel Hilbert spaces
    \MSC[2020] 46E22 \sep 30H45
\end{keyword}

\end{frontmatter}

\section{Introduction}
We study the minimality of the following axiomatic description of a class of Hilbert spaces of entire functions called de Branges spaces (known as de Branges axioms) introduced in \cite{deBranges59}:
\begin{enumerate}
    \item[(H1)] Whenever $F$ is in the space and has a non-real zero $w$, the function $F(\cdot)(\cdot - \bar{w})(\cdot - w)^{-1}$ belongs to the space and has the same norm.
    \item[(H2)] For every complex number $w$, the linear functional defined on the space by $F \mapsto F(w)$ is continuous.
    \item[(H3)] Whenever $F$ is in the space, the function $F^*(z) = \overline{F(\bar{z})}$ belongs to the space and has the same norm.
\end{enumerate}

The aim of this article is to show that it is possible to transform any de Branges space from a suitable class into a Hilbert space of entire functions that violates (H2), but satisfies (H1) and (H3).

One of the most useful properties of de Branges spaces is \cite[Theorem 23]{deBranges68}, given below. It states that de Branges spaces admit the following constructive definition, which is often easier to work with once it is established that the axioms are fulfilled.

\begin{Df}
    An entire function $E$ belongs to the Hermite-Biehler class if $|E^*(z)| < |E(z)|$ for all $z \in \C^+$.
\end{Df}

\begin{Df}
    The de Branges space $H(E)$ associated with a Hermite-Biehler entire function $E$ is the set of all entire functions $F$ such that 
    \[ ||F||^2_E = \int\limits_\R \bigg|\frac{F(t)}{E(t)}\bigg|^2 \dd t < +\infty \]
    and $F/E$, $F^*/E$ belong to $H^2(\C^+)$, the Hardy space on the upper half-plane.
\end{Df}

\begin{Th}[de Branges]\label{EquivalenceOfDefinitions}
    A Hilbert space of entire functions $\mathcal{H} \ne \{0\}$ satisfies (H1) -- (H3) iff there exists an entire function $E$ of the Hermite-Biehler class such that $\mathcal{H} = H(E)$ and $||\cdot||_\mathcal{H} = ||\cdot||_E$.
\end{Th}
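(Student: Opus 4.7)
The ``if'' direction is a direct verification. Given Hermite--Biehler $E$ and $F \in H(E)$ with $F(w) = 0$ at some $w \in \C \setminus \R$, the function $G(z) := F(z)(z-\bar w)/(z-w)$ is entire (removable singularity at $w$), satisfies $|G(t)/E(t)| = |F(t)/E(t)|$ on $\R$ since $|t-\bar w|=|t-w|$, and $G/E$, $G^*/E$ remain in $H^2(\C^+)$ because multiplication by a Blaschke-type factor is an $H^2$-isometry; this yields (H1). Axiom (H3) is immediate from the symmetric role of $F$ and $F^*$ in the definition, since $|F^*(t)/E(t)| = |F(t)/E(t)|$ on $\R$ and the two Hardy conditions are swapped by $F \mapsto F^*$. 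For (H2), one exhibits the reproducing kernel
\[
K_E(w,z) = \frac{E(z)\overline{E(w)} - E^*(z)\overline{E^*(w)}}{2\pi i(\bar w - z)}
\]
and verifies the reproducing property by applying the Cauchy integral formula in $\C^+$ to $F/E$ together with the complementary identity for $F^*/E$.

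For the ``only if'' direction, axiom (H2) supplies a reproducing kernel $K(w,z)$, entire in $z$ with $K(z,w) = \overline{K(w,z)}$, and (H3) yields the additional symmetry $K(\bar w, \bar z) = \overline{K(w,z)}$. The crucial input is (H1). Fix a non-real $w_0$ with $K(w_0,w_0) \ne 0$; such a $w_0$ exists because $\mathcal{H} \ne \{0\}$. For each $a \in \C$ the element
\[
F_a(z) := K(a,z) - \frac{K(a,w_0)}{K(w_0,w_0)}\,K(w_0,z)
\]
vanishes at $w_0$, so by (H1) the function $G_a(z) := F_a(z)(z-\bar w_0)/(z-w_0)$ lies in $\mathcal{H}$ with $\|G_a\| = \|F_a\|$. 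Equating $\langle G_a, G_b\rangle = \langle F_a, F_b\rangle$ and expanding both sides with the reproducing property produces a functional identity in $K$ evaluated at four points. Rearranging and using the symmetries from (H2), (H3) one extracts entire functions $A, B$, real on $\R$, such that
\[
K(w,z) = \frac{B(z)\overline{A(w)} - A(z)\overline{B(w)}}{\pi(z-\bar w)}.
\]

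Setting $E := A - iB$, the positivity $K(w,w) > 0$ for $w \in \C^+$ translates into $|E^*(w)| < |E(w)|$ on $\C^+$, so $E$ is Hermite--Biehler, and a short calculation identifies the formula above with $K_E$. Two Hilbert spaces of functions with the same reproducing kernel coincide isometrically, hence $\mathcal{H} = H(E)$ with equal norms. The main technical obstacle lies in the middle step: one must show that the identity extracted from (H1) genuinely factors $K$ as a rank-two tensor built from \emph{a single} pair $(A,B)$, that $A$ and $B$ are entire rather than merely meromorphic, and that they can be chosen real on the real axis. This requires choosing $w_0$ carefully, analytically continuing the resulting identity off the diagonal, and verifying that the apparent poles at the zeros of $K(w_0,\cdot)$ cancel.
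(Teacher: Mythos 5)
The paper does not prove this theorem at all: it is quoted as Theorem 23 of de Branges's 1968 book and used as a black box (the author explicitly says the constructive definition "is mentioned only as a part of a brief historical account"), so there is no in-paper argument to compare yours against. Your outline follows the standard classical route. The "if" direction as you sketch it is essentially complete; the one point worth making explicit is that for $w \in \C^+$ the factor $(z-\bar w)/(z-w)$ is the \emph{reciprocal} of a Blaschke factor, so "multiplication by a Blaschke-type factor is an $H^2$-isometry" does not apply directly — you need $F/E$ to vanish at $w$, which holds because $E$ is zero-free in $\C^+$, before you may divide by $(z-w)/(z-\bar w)$ and stay in $H^2$.

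The "only if" direction, however, is a plan rather than a proof, and the gap you flag yourself is the entire content of the theorem. Two concrete obstructions. First, the identity $\langle G_a, G_b\rangle = \langle F_a, F_b\rangle$ does not by itself yield "a functional identity in $K$ at four points": the right-hand side reduces to $F_a(b)$ by the reproducing property, but the left-hand side involves $G_b$, which is not a kernel section. To convert it you must first show that $F \mapsto F(\cdot)(\cdot-\bar w_0)(\cdot-w_0)^{-1}$ maps $\{F : F(w_0)=0\}$ \emph{onto} $\{G : G(\bar w_0)=0\}$ (apply (H1) a second time at $\bar w_0$ to produce the inverse), so that it carries the reproducing kernel of the first subspace to that of the second; only then do you obtain the closed-form relation between the compressed kernels at $w_0$ and $\bar w_0$ from which the structure is extracted. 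Second, passing from that relation to entire $A$, $B$, real on $\R$, with $K(w,z) = (B(z)\overline{A(w)} - A(z)\overline{B(w)})/(\pi(z-\bar w))$ is a genuine multi-step argument (several pages in de Branges's book), not a "rearrangement": one must define $A$ and $B$ from specific kernel sections, continue the identity analytically in both variables off the diagonal, and verify that the candidate functions acquire no poles at the zeros of $K(w_0,\cdot)$. As written, the proposal correctly locates where the proof lives but does not carry out its decisive step.
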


We shall not use the constructive definition, it is mentioned only as a part of a brief historical account.
Now we turn our attention to the axiomatic description of de Branges spaces.
\newline
\indent Various aspects of independence of the de Branges axioms have been studied before. For example, L. de Branges showed that the following result holds.
Although his work \cite[Problem 54]{deBranges68} includes only the statement, one may find a detailed proof, for instance, in \cite{Romanov24}.
\begin{St*}[de Branges]
    Let $\mathcal{H} \ne \{0\}$ be a Hilbert space of entire functions which satisfies (H1) and (H2).
    Then there exists a de Branges space $H(E)$ and an entire function $U$ with $UU^* \equiv 1$ such that $F \mapsto UF$ is an isometric transformation of $\mathcal{H}$ onto $H(E)$.
\end{St*}

We are concerned with independence of the axiom (H2). The subject in question was considered in the original de Branges' papers: in \cite{deBranges59} L. de Branges showed that Theorem \ref{EquivalenceOfDefinitions} still holds if one weakens the axiom (H2) by requiring the continuity of the point evaluation functionals only for non-real points. Moreover, at the bottom of page 445 of \cite{deBranges63} L. de Branges posed the following conjecture, which we quote verbatim.
\begin{Cn*}[de Branges]
    The axiom (H2) which appears here is conjectured to be a consequence of (H1).
\end{Cn*}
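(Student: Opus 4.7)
The approach is to construct, for each $w \in \C$, a reproducing element for point evaluation at $w$ directly from axiom (H1). Fix first a non-real $w$ with $\operatorname{Im} w > 0$ and set $\mathcal{H}_w := \{F \in \mathcal{H} : F(w) = 0\}$ (defined pointwise; not a priori norm-closed). Axiom (H1) says the linear map $T_w : F \mapsto F(z)(z-\bar w)/(z-w)$ carries $\mathcal{H}_w$ isometrically into $\mathcal{H}$. A first observation is that $T_w$ is in fact an isometric bijection $\mathcal{H}_w \to \mathcal{H}_{\bar w}$: given $G \in \mathcal{H}_{\bar w}$, applying (H1) with $\bar w$ in place of $w$ yields $F := G(z)(z-w)/(z-\bar w) \in \mathcal{H}$ of the same norm, and one verifies $F(w) = 0$ and $T_w F = G$; the inverse is $T_{\bar w}$.

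Since $F \mapsto F(w)$ is a linear functional on $\mathcal{H}$, its kernel $\mathcal{H}_w$ has algebraic codimension at most one, so continuity at $w$ is equivalent to $\mathcal{H}_w$ being norm-closed. The natural argument is: given $F_n \in \mathcal{H}_w$ with $F_n \to F$ in $\mathcal{H}$, use that $T_w$ is an isometry to deduce $T_w F_n$ is Cauchy, hence $T_w F_n \to G$ for some $G \in \mathcal{H}$; then identify $G$ pointwise with $F(z)(z-\bar w)/(z-w)$, which, because $G$ is entire while this product has a pole at $w$ unless $F(w) = 0$, forces $F(w) = 0$.

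The pointwise identification of $G$ requires an a priori estimate $|H(z)| \le C_z \|H\|$ for $H \in \mathcal{H}$, at least on a set large enough to invoke the identity theorem. To extract such a majorant from (H1) alone, the plan is to iterate: for distinct $w_1, \dots, w_n$ in the open upper half-plane, the composition $T_{w_n} \cdots T_{w_1}$ on $\bigcap_k \mathcal{H}_{w_k}$ is a norm-preserving embedding into $\mathcal{H}$, producing a rich multiplicative structure of Blaschke-type factors. The hope is that the resulting isometries, combined with Montel's theorem applied to sequences in $\mathcal{H}$ whose Blaschke quotients have bounded norm, force uniform bounds on $|H(z)|$ over compact subsets of some open set. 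Once point evaluation is continuous at a single non-real $w$, the isometries $T_w$ and $T_{\bar w}$ propagate it to every non-real point, and the weakening of (H2) mentioned earlier (continuity at non-real points, together with (H1) and (H3)) then yields (H2) in full.

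The hard part is the pointwise a priori bound. Axiom (H1) is a single norm identity for a single Möbius-type multiplier; it governs how elements of $\mathcal{H}$ transform under multiplication by a function unimodular on $\R$, but says nothing that a priori controls $|F(w)|$ by $\|F\|$. Turning a family of isometric identifications among vanishing subspaces into a quantitative pointwise majorant is the essential difficulty on which the entire conjecture hinges.
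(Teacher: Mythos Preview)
Your proposal attempts to \emph{prove} the conjecture, but the paper \emph{refutes} it: Theorem~\ref{MainTheorem} constructs, for every $\mu\in\mathcal{M}$, a Hilbert space $\widetilde{\mathcal{H}}$ of entire functions satisfying (H1) and (H3) while violating (H2). Hence no argument deriving (H2) from (H1) can succeed, and the gap you yourself flag is not merely ``hard'' but unfillable.

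The failure is exactly where you locate it. Your scheme for closing $\mathcal{H}_w$ runs: from $F_n\to F$ in norm and $F_n\in\mathcal{H}_w$, the isometry $T_w$ gives $T_wF_n\to G$ in norm, and you then want to identify $G(z)$ pointwise with $F(z)(z-\bar w)/(z-w)$. But passing from norm convergence to pointwise convergence is precisely the content of (H2); invoking it here is circular. Your proposed workaround---iterating the Blaschke isometries and appealing to Montel---produces only more norm identities among vanishing subspaces, never a bound of the form $|H(z)|\le C_z\|H\|$. In the paper's counterexample $\widetilde{\mathcal{H}}$ one has a sequence $F_{c_n}\to G_{u_0}$ in norm with $F_{c_n}(z_0)\to F_{u_0}(z_0)\ne 0$ while $G_{u_0}(z_0)=0$; thus no constant $C_{z_0}$ with $|H(z_0)|\le C_{z_0}\|H\|$ can exist, and every step of your program that presupposes such a majorant collapses. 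The isometries $T_w$ are genuinely present in $\widetilde{\mathcal{H}}$ (that is what (H1) asserts), yet they do not force point evaluations to be bounded.
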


We shall show that the Conjecture is false using the following class of Hilbert spaces of entire functions.
\begin{Df}
    Let $\mathcal{M}$ be the set of all Borel measures $\mu$ supported on infinite discrete (i.e. having no finite accumulation points) subsets of the real line. We denote by $\{t_i\}_{i \in \N}$ an enumeration of $\supp\mu$ and by $\mu_n$ the mass of $\mu$ at $t_n$.
\end{Df}

\begin{Df}
    A Hilbert space of entire functions $\mathcal{H}$ is isometrically isomorphic to $\ell^2(\mu)$, where $\mu \in \mathcal{M}$, if $||F||^2_\mathcal{H} = \sum_n \mu_n |F(t_n)|^2$ and the map $F \mapsto \{F(t_i)\}_{i \in \N}$ is bijective.
\end{Df}

\begin{Rm}\label{Remark}
    For Hilbert spaces of entire functions that are isometrically isomorphic to $\ell^2(\mu)$ for some $\mu \in \mathcal{M}$ the requirement of equality of norms in the properties (H1) and (H3) is not needed, since $\mu$ is supported on the real line.
\end{Rm}

Our main result, proven in paragraph \ref{ConstructionOfHTilde}, is as follows.

\begin{Th}\label{MainTheorem}
    For every $\mu \in \mathcal{M}$ and a de Branges space $\mathcal{H}$, isometrically isomorphic to $\ell^2(\mu)$, there exists a Hilbert space of entire functions $\widetilde{\mathcal{H}}$ such that
    \begin{itemize}
        \item $\widetilde{\mathcal{H}}$ is isometrically isomorphic to $\ell^2(\mu)$;
        \item $\widetilde{\mathcal{H}}$ satisfies (H1) and (H3), but not (H2);
        \item $\mathcal{H} \cap \widetilde{\mathcal{H}}$ is dense in $\mathcal{H}$ and $\widetilde{\mathcal{H}}$.
    \end{itemize}
\end{Th}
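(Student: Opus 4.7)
Plan: By Theorem~\ref{EquivalenceOfDefinitions}, after rotating $E$ by a unimodular constant (which does not change $H(E)$ as a normed space of entire functions), I may assume $\mathcal{H}=H(E)$ with $E=A-iB$ in the real-entire decomposition and with $A$ having zero set exactly $\{t_n\}$. For $a\in\ell^2(\mu)$ let $F_a\in\mathcal{H}$ denote the unique element with $F_a(t_n)=a_n$, and set
\[
    \widetilde{F}_a(z) := F_a(z)+L(a)\,A(z),\qquad \widetilde{\mathcal{H}} := \{\widetilde{F}_a : a\in\ell^2(\mu)\},
\]
with norm $\|\widetilde{F}_a\|^2:=\sum_n\mu_n|a_n|^2$, where $L\colon\ell^2(\mu)\to\C$ is a discontinuous linear functional to be constructed so as to be conjugation symmetric, $L(\bar a)=\overline{L(a)}$, and $T_w$-invariant, $L(T_w a)=L(a)$ for every non-real $w$ (with $(T_w a)_n:=T_w(t_n)a_n$). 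Since $A(t_n)=0$, one has $\widetilde{F}_a(t_n)=a_n$, so $\widetilde{\mathcal{H}}$ is isometrically isomorphic to $\ell^2(\mu)$ via evaluations at $\{t_n\}$, and by Remark~\ref{Remark} the norm equalities in (H1), (H3) are automatic. Axiom (H3) reduces to $(\widetilde{F}_a)^*=\widetilde{F}_{\bar a}$, which follows from $A^*=A$ together with the conjugation symmetry of $L$. Axiom (H2) fails at any non-real $w_0$ with $A(w_0)\ne0$: the map $a\mapsto\widetilde{F}_a(w_0)=F_a(w_0)+L(a)A(w_0)$ is the sum of a bounded and an unbounded linear functional of $a$.

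The analytic core is (H1). Fix non-real $w$ and $a$ with $\widetilde{F}_a(w)=0$, so $L(a)=-F_a(w)/A(w)$, and put $b_n:=T_w(t_n)a_n$. Applied at $w=t_n$ (where $A$ vanishes), the reproducing kernel $K(z,w)=\bigl(B(z)A(\bar w)-A(z)B(\bar w)\bigr)/\bigl(\pi(z-\bar w)\bigr)$ of $H(E)$ gives $D_n(z)=\mu_n K(z,t_n)=-\mu_n B(t_n)A(z)/\bigl(\pi(z-t_n)\bigr)$, whence the partial fraction expansion
\[
    \frac{F_a(z)}{A(z)} = \sum_n \frac{a_n c_n}{z-t_n},\qquad c_n := -\frac{\mu_n B(t_n)}{\pi}.
\]
Using $T_w(z)/(z-t_n)=T_w(t_n)/(z-t_n)+2i\,\mathrm{Im}(w)/\bigl((w-t_n)(z-w)\bigr)$ and summing over $n$ yields
\[
    \frac{T_w F_a}{A} = \frac{F_b}{A}-\frac{2i\,\mathrm{Im}(w)\,L(a)}{z-w},
\]
and combining this with $T_w(z)=1+2i\,\mathrm{Im}(w)/(z-w)$ I obtain $(T_w\widetilde{F}_a-F_b)/A\equiv L(a)$. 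Hence $T_w\widetilde{F}_a=F_b+L(a)A$, which equals $\widetilde{F}_b=F_b+L(b)A$ precisely when $L(b)=L(a)$; this is the $T_w$-invariance imposed on $L$.

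The main obstacle is exhibiting such an $L$. The identity $(I-T_w)a_n=-2i\,\mathrm{Im}(w)\,a_n/(t_n-w)$ shows $(I-T_w)\ell^2(\mu)=\{c:\sum_n\mu_n|c_n|^2|t_n-w|^2<\infty\}$, so the algebraic span $W$ over all non-real $w$ equals the weighted subspace $\{c:\sum_n\mu_n|c_n|^2|t_n|^2<\infty\}$ (taking $w=i$ already exhibits it). This $W$ contains the finitely supported sequences and so is dense in $\ell^2(\mu)$, but it is proper: discreteness of $\supp\mu$ forces $|t_n|\to\infty$, and extracting a subsequence with $|t_{n_k}|\ge k$ and putting $c_{n_k}:=1/(k\sqrt{\mu_{n_k}})$ (other coordinates zero) yields $c\in\ell^2(\mu)\setminus W$. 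I then choose a Hamel basis of $\ell^2(\mu;\R)$ extending a Hamel basis of $W\cap\ell^2(\mu;\R)$, declare $L=0$ on the first part and $L=1$ on a single outside basis vector, and extend complex-linearly; the resulting $L$ is conjugation symmetric, discontinuous, and vanishes on $W$, hence $T_w$-invariant for every non-real $w$. Finally $\mathcal{H}\cap\widetilde{\mathcal{H}}=\{F_a:L(a)=0\}$ contains $\{F_a:a\in W\}$, a dense subspace of $\mathcal{H}$, giving density in both $\mathcal{H}$ and $\widetilde{\mathcal{H}}$.
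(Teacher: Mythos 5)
Your proposal is correct, and the space it produces is in fact the same as the paper's, but the route is genuinely different: the paper deliberately never invokes the constructive description $H(E)$, whereas your argument leans on it at every step. The paper works with the abstract isometric isomorphism of $\mathcal{H}$ onto $\ell^2(\mu)$, takes the domain of the multiplication operator $\mathcal{A}$ (which is exactly your subspace $W=\{c:\sum_n\mu_n|c_n|^2t_n^2<\infty\}$), splits $\ell^2(\mu)$ algebraically into symmetric pieces by the same real-Hamel-basis device you use for $L$, and replaces $F_u$ by $G_u(z)=(z-z_0)F_{D_{z_0}(u)}(z)$; axiom (H1) is then verified by pure algebra with the operators $D_w$, using only that $\mathcal{H}$ itself satisfies (H1). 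You instead import structure theory: the decomposition $E=A-iB$, the reproducing kernel formula, and --- the one step you assert rather than prove --- that after rotating $E$ one may take $A(t_n)=0$ for all $n$. That reduction is true (the isometry onto $\ell^2(\mu)$ forces the kernels $K(\cdot,t_n)$ to form a complete orthogonal system, which places $\{t_n\}$ in a single level set of the phase of $E$), and you also tacitly use that $A$ has no non-real zeros; both facts are standard but are genuine uses of de Branges' theory that the paper avoids, as is the convergence analysis your partial-fraction expansion requires. Note that you only need $A$ to vanish \emph{at least} on $\{t_n\}$, not exactly there. The reward of your route is conceptual: it shows the perturbation lives along the single direction $A$ and that (H1) is precisely invariance of the functional $L$ under the unimodular multipliers $T_w$, which explains why $\Ker L$ must contain $\Ran(I-T_w)=W$. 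Indeed, partial fractions give $(z-z_0)F_{D_{z_0}(u)}(z)=F_u(z)-\frac{F_u(z_0)}{A(z_0)}A(z)$, so the paper's $H_v$ is exactly your $F_v+L(v)A$ with $L(v)=-F_u(z_0)/A(z_0)$ for $v=u+c$: the same construction with a differently presented discontinuous, conjugation-symmetric functional vanishing on $W$.
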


To refute the Conjecture for the aforementioned class of Hilbert spaces, Theorem \ref{MainTheorem} requires us to start with a “bootstrap” de Branges space $\mathcal{H}$, which is isometrically isomorphic to $\ell^2(\mu).$ The following folklore-type result, the proof of which will be given for completeness in paragraph \ref{ConstructionOfXv}, allows us to complete the construction of our counterexamples.

\begin{Th}\label{SecondTheorem}
    For every measure $\mu \in \mathcal{M}$ there exists a de Branges space $\mathcal{H}$ that is isometrically isomorphic to $\ell^2(\mu)$.
\end{Th}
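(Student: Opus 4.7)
The plan is to realize $\mathcal{H}$ as a de Branges space $H(E)$ for a Hermite--Biehler function $E = A - iB$ whose imaginary part $B$ is real entire with simple zeros exactly at $\supp\mu = \{t_n\}$ and whose diagonal reproducing kernel satisfies $K_E(t_n, t_n) = 1/\mu_n$. With these two properties in place, de Branges' axis-sampling theorem (Theorem 22 of \cite{deBranges68}), applied with phase parameter $0$, yields the Parseval identity $\|F\|_E^2 = \sum_n \mu_n |F(t_n)|^2$ and the bijectivity of $F \mapsto \{F(t_n)\}$ onto $\ell^2(\mu)$, provided the accompanying completeness hypothesis $B \notin H(E)$ is met.

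For the construction, first let $B_0$ be a real entire Hadamard canonical product with simple zeros at $\{t_n\}$ (convergent thanks to the discreteness of $\supp\mu$); multiplying $B_0$ by auxiliary real entire factors without real zeros, for example $P(z) = \prod_n (1 + z^2/a_n^2)$ with a rapidly growing sequence $a_n$, produces $B := B_0 P$ with the same zeros but with $B'(t_n) = B_0'(t_n) P(t_n)$ as large as desired. Set $r_n := \pi / (\mu_n (B'(t_n))^2) > 0$, arranging via the freedom in $B'(t_n)$ that $\sum_n r_n/(1+t_n^2) < \infty$, and form the Nevanlinna--Mittag-Leffler series
$$
m(z) := \beta + \sum_n r_n \left( \frac{1}{t_n - z} - \frac{t_n}{1 + t_n^2} \right), \qquad \beta \in \R.
$$
It converges locally uniformly off $\{t_n\}$, is real on $\R \setminus \{t_n\}$, and is Herglotz with $\operatorname{Im} m(z) > 0$ on $\C^+$. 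The simple poles of $m$ are cancelled by the simple zeros of $B$, so $A := -Bm$ is real entire with $A(t_n) = r_n B'(t_n)$; therefore
$$
|E(z)|^2 - |E^*(z)|^2 = 4|B(z)|^2 \operatorname{Im} m(z) > 0, \qquad z \in \C^+,
$$
showing $E := A - iB$ lies in the Hermite--Biehler class, and a direct computation gives $K_E(t_n,t_n) = A(t_n) B'(t_n)/\pi = r_n (B'(t_n))^2/\pi = 1/\mu_n$.

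The step I expect to require the most care is the completeness condition $B \notin H(E)$, equivalent to
$$
\int_\R \frac{|B(t)|^2}{|E(t)|^2}\, dt = \int_\R \frac{dt}{1 + m(t)^2} = +\infty.
$$
Since $m$ is monotone increasing on each component of $\R \setminus \{t_n\}$ and must pass through $0$ in every such interval, one has $|m(t)| \le 1$ on a neighbourhood of each zero of $m$; by further exploiting the freedom in the auxiliary factors of $B$ to suppress the slope of $m$, the total Lebesgue measure of $\{t : |m(t)| \le 1\}$ can be made infinite, forcing divergence of the integral above. Once $B \notin H(E)$ is verified, the cited Theorem 22 identifies $H(E)$ isometrically with $\ell^2(\mu)$, and $\mathcal{H} := H(E)$ is the required de Branges space.
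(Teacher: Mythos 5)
Your strategy is genuinely different from the paper's: you build a Hermite--Biehler function $E=A-iB$ from a Herglotz function $m$ and invoke de Branges' sampling theorem, whereas the paper avoids the constructive definition entirely, defines $\mathcal{H}$ as the explicit interpolation space $X_v(z)=\sum_n v_n S(z)\,(z-t_n)^{-1}S'(t_n)^{-1}$ for an entire $S$ with $S=S^*$, simple zeros exactly on $\supp\mu$ and $\sum_n |S'(t_n)|^{-2}\mu_n^{-1}<\infty$, and then checks (H1)--(H3) by hand. Your route is viable in principle, but as written it has a concrete error: the auxiliary factor $P(z)=\prod_n(1+z^2/a_n^2)$ has no \emph{real} zeros, but it vanishes at the non-real points $\pm ia_n$. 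Since $A=-Bm$ with $m$ holomorphic off $\R$, you get $E=-B(m+i)$, so $E(ia_n)=E^*(ia_n)=0$ and the strict inequality $|E^*(z)|<|E(z)|$ fails at $z=ia_n\in\C^+$; your $E$ is not Hermite--Biehler and $H(E)$ is not a de Branges space. A second, independent problem with the same factor: $\log P(t)\le t^2\sum_n a_n^{-2}$ on $\R$, so $P$ grows at most like $e^{Ct^2}$ and cannot make $B'(t_n)$ large enough to ensure $\sum_n r_n/(1+t_n^2)<\infty$ for measures whose masses $\mu_n$ decay faster than any order-two rate --- and $\mathcal{M}$ allows arbitrary positive masses. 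Both defects are cured by the same repair: the correcting factor must be \emph{zero-free}, i.e.\ of the form $e^{T}$ with $T$ real entire, with $T(t_n)$ prescribed by a Mittag-Leffler interpolation so that $B'(t_n)$ takes any desired values. This is exactly the content of the paper's Lemma \ref{ExistenceOfS}.

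Beyond that, the rest of your computation ($A(t_n)=r_nB'(t_n)$, $K_E(t_n,t_n)=A(t_n)B'(t_n)/\pi=1/\mu_n$, $|E|^2-|E^*|^2=4|B|^2\Im m$) is correct once $B$ has only real zeros, and the reduction of $B\notin H(E)$ to $\int_\R (1+m(t)^2)^{-1}\,dt=+\infty$ is right. But the divergence argument is only sketched: the lower bound on the measure of $\{|m|\le 1\}$ in each gap $(t_k,t_{k+1})$ degenerates when consecutive gaps shrink (which is permitted for discrete supports, e.g.\ $t_k\sim\log k$), so you would need a quantitative estimate such as $\ell_k\ge 2/\max m'$ on the relevant subinterval together with control of $m'$ there, or a separate treatment of an unbounded gap when $\supp\mu$ is semi-bounded. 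This is fillable, but it is the kind of delicate analysis the paper's direct verification of (H1)--(H3) for the interpolation space sidesteps completely.
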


\section{Proofs}

\subsection{Preliminaries}

Our construction is an extension of a standard way of producing a discontinuous functional on a Hilbert space: pick a linearly independent sequence $\{x_n\}$ converging to $x_0$ and set the functional to one on $\{x_n\}$ and to zero on $x_0$, then extend this partially-defined functional in any way to the entire Hilbert space. In this paragraph we develop the necessary machinery for extending linear functionals from a linear subset of Hilbert spaces.

\begin{Df}
    A generating set $Q$ of a vector space $U$ is a subset of $U$ such that every element of $U$ is a finite linear combination of elements of $Q$.
\end{Df}

\begin{Df}
    A Hamel basis of a vector space is a linearly independent generating set.
\end{Df}

\begin{Th}[Existence of a Hamel basis]\label{HamelBasis}
    Let $P$ be a linearly independent set contained in a generating set $Q$, then there exists a Hamel basis $\mathcal{B}$ satisfying $P \subseteq \mathcal{B} \subseteq Q$.
\end{Th}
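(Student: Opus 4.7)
The plan is a standard Zorn's lemma argument on linearly independent subsets trapped between $P$ and $Q$, with the crucial detail that the spanning property of the maximal element is extracted from the generating assumption on $Q$.

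First I would set up the poset. Let $\mathcal{F}$ be the collection of all subsets $S$ of $U$ with $P \subseteq S \subseteq Q$ that are linearly independent, partially ordered by inclusion. This family is non-empty because $P \in \mathcal{F}$ by hypothesis. Next I would verify the chain condition: given any chain $\{S_\alpha\}_{\alpha \in A}$ in $\mathcal{F}$, the union $S := \bigcup_\alpha S_\alpha$ trivially satisfies $P \subseteq S \subseteq Q$, and any finite subset of $S$ lies inside a single $S_\alpha$ (since the chain is totally ordered), hence is linearly independent. Therefore $S \in \mathcal{F}$ and is an upper bound for the chain.

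By Zorn's lemma, $\mathcal{F}$ admits a maximal element $\mathcal{B}$. It remains to show $\mathcal{B}$ is a Hamel basis of $U$. Linear independence is automatic from $\mathcal{B}\in\mathcal{F}$, so the content lies in showing that $\mathcal{B}$ spans $U$. Here I would invoke the assumption that $Q$ generates $U$: since every element of $U$ is a finite linear combination of elements of $Q$, it is enough to prove $Q \subseteq \span \mathcal{B}$. Suppose for contradiction that some $q \in Q$ does not lie in $\span \mathcal{B}$. Then $\mathcal{B} \cup \{q\}$ is still linearly independent (a nontrivial relation would express $q$ as a linear combination of elements of $\mathcal{B}$), still contains $P$, and still lies in $Q$; thus $\mathcal{B} \cup \{q\} \in \mathcal{F}$, strictly larger than $\mathcal{B}$, contradicting maximality.

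There is no real obstacle here beyond correctly identifying where the hypothesis on $Q$ is used. The only place where it is essential that $Q$ generates (rather than merely being some ambient set) is the final step, where we reduce "spanning $U$" to "containing $Q$ in the span." Without this reduction, the maximal element would only be guaranteed to be a maximal linearly independent subset of $Q$, not a Hamel basis of $U$.
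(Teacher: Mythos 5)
Your Zorn's lemma argument is correct and complete: the poset of linearly independent sets $S$ with $P \subseteq S \subseteq Q$, the chain/union verification, and the reduction of spanning to $Q \subseteq \span\mathcal{B}$ via maximality are exactly the standard proof of this classical result, which the paper states without proof. Nothing to add.
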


While in the general case the above Theorem would suffice for extending a linear functional, the space $\widetilde{\mathcal{H}}$ must satisfy (H3). For this purpose we introduce an auxiliary definition of a symmetric linear subset of a Hilbert space and a Lemma which allows us to partition a Hilbert space into two symmetric linear pieces.

\begin{Df}
    We call a linear subset $L \subseteq \ell^2(\mu)$, where $\mu \in \mathcal{M}$, symmetric if $\bar{v} \in L$ when $v \in L$, where $\bar{v}$ denotes $\{\overline{v_i}\}_{i \in \N}$.
\end{Df}

\begin{Lm}\label{SymmetricLinearComplement}
    For any symmetric linear subset $L \subseteq \ell^2(\mu)$, where $\mu \in \mathcal{M}$, there exists a symmetric linear subset $M \subseteq \ell^2(\mu)$, such that every element of $\ell^2(\mu)$ can be uniquely expressed as a sum of elements of $L$ and $M$.
\end{Lm}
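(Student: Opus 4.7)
The plan is to reduce the complex problem to a real one by exploiting the fact that ``symmetric'' means ``invariant under the antilinear map $C : v \mapsto \bar v$''. Concretely, let $\ell^2(\mu)_\R$ denote the real subspace $\{v \in \ell^2(\mu) : \bar v = v\}$, and for any symmetric linear subset $S \subseteq \ell^2(\mu)$ set $S_\R := S \cap \ell^2(\mu)_\R$. The first step will be to verify the straightforward decomposition $S = S_\R \oplus i S_\R$ (as real vector spaces): given $v \in S$, the symmetry of $S$ together with linearity implies that both $\operatorname{Re} v = (v + \bar v)/2$ and $\operatorname{Im} v = (v - \bar v)/(2i)$ lie in $S_\R$, while the reverse inclusion is immediate.

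Having set this up, the second step is to construct a real algebraic complement $M_\R$ of $L_\R$ inside $\ell^2(\mu)_\R$. For this I would apply Theorem \ref{HamelBasis} in the category of real vector spaces: pick any Hamel basis $\mathcal{B}_L$ of $L_\R$, note that it is a real-linearly independent subset of the (real) generating set $\mathcal{B}_L \cup \ell^2(\mu)_\R$, and extend it to a real Hamel basis $\mathcal{B}$ of $\ell^2(\mu)_\R$; then let $M_\R$ be the real span of $\mathcal{B} \setminus \mathcal{B}_L$. Finally, define $M := M_\R \oplus i M_\R$.

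The verification phase has three parts. First, $M$ is a complex linear subspace: closure under addition is obvious, and $i(a + ib) = -b + ia \in M_\R + i M_\R$. Second, $M$ is symmetric, since $\overline{a + ib} = a - ib \in M_\R + i M_\R$. Third, I need to show $\ell^2(\mu) = L \oplus M$ algebraically. Existence of the decomposition: for any $v \in \ell^2(\mu)$, write $v = a + ib$ with $a, b \in \ell^2(\mu)_\R$, then decompose $a = l_1 + m_1$, $b = l_2 + m_2$ uniquely in $\ell^2(\mu)_\R$, giving $v = (l_1 + il_2) + (m_1 + im_2) \in L + M$. Uniqueness reduces to $L \cap M = \{0\}$: any $w$ in the intersection has both real and imaginary parts in $L \cap M \cap \ell^2(\mu)_\R = L_\R \cap M_\R = \{0\}$, using that $L$ and $M$ are each symmetric to argue that the real and imaginary parts of $w$ belong to $L$ and to $M$ respectively.

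There is no substantial obstacle; the only subtle point is recognising that the symmetry condition is exactly what is needed to pass to a real structure, after which the construction is just Hamel basis extension over $\R$. No analytic or topological content is required, which is appropriate because the desired decomposition is purely algebraic (no closedness is demanded of $M$).
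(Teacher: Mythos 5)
Your proof is correct and follows essentially the same route as the paper: both arguments exploit that a symmetric subspace is spanned by its real-valued elements (via $v = \Re v + i\Im v$) and then extend a Hamel basis of real-valued sequences from $L$ to all of $\ell^2(\mu)$. The only cosmetic difference is that you carry out the basis extension over $\R$ inside $\ell^2(\mu)_\R$ and then complexify, whereas the paper extends a complex Hamel basis chosen from the generating set of real and imaginary parts.
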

\begin{proof}
    For a linear subset $V \subseteq \ell^2(\mu)$ let
    \[ Q_V = \bigcup_{v \in V} \{\Re(v),\ \Im(v)\}, \]
    where $\Re(v) = \{\Re(v_i)\}_{i \in \N}$ and $\Im(v) = \{\Im(v_i)\}_{i \in \N}$.
    Clearly, $V$ is symmetric if and only if $Q_V \subseteq V$.
    \newline
    \indent If $L = \{0\}$, put $M = \ell^2(\mu)$. Otherwise, let $c_0 \in L$ be a non-zero real-valued sequence, and let $\mathcal{B}_L$ be a Hamel basis of $L$ extending $\{c_0\}$ and consisting of real-valued sequences, by applying Theorem \ref{HamelBasis} with $P = \{c_0\}$ and $Q = Q_L$.
    Applying Theorem \ref{HamelBasis} again with $P = \mathcal{B}_L$ and $Q = Q_{\ell^2(\mu)}$, we can extend $\mathcal{B}_L$ to $\mathcal{B}_{\ell^2(\mu)}$ -- a Hamel basis of ${\ell^2(\mu)}$, consisting of real-valued sequences.
    Putting $M = \span(\mathcal{B}_{\ell^2(\mu)} \backslash \mathcal{B}_L)$ concludes the proof.
\end{proof}

\subsection{Construction of the space $\widetilde{\mathcal{H}}$}\label{ConstructionOfHTilde}

\begin{Df}
    For $\mu \in \mathcal{M}$, $w \in \C\backslash\supp\mu$ and $v \in \ell^2(\mu)$ define
    \[ D_w(v) = \{v_i (t_i - w)^{-1}\}_{i \in \N}. \]
\end{Df}

\begin{proof}[Proof of Theorem \ref{MainTheorem}]
    Let $W$ be an isometric isomorphism of $\mathcal{H}$ onto $\ell^2(\mu)$. For $v \in \ell^2(\mu)$ and $V \subseteq \ell^2(\mu)$ define
    \[ F_v = W^{-1}(v),\ \mathcal{F}_V = \Ran(W^{-1}|_V). \]
    Let $\mathcal{A}$ be the operator of multiplication by the independent variable, i.e. $\mathcal{A}v_n = t_n v_n$ for $v \in \ell^2(\mu)$, then $\mathcal{A}$ is an unbounded self-adjoint operator with dense in $\ell^2(\mu)$ domain $L$.
    \newline
    \indent By Lemma \ref{SymmetricLinearComplement} there exists a symmetric linear subset $M \subseteq \ell^2(\mu)$, with every element of $\ell^2(\mu)$ uniquely expressible as a sum of elements of $L$ and $M$.
    Since $\mathcal{A}$ is closed and unbounded, $M \ne \{0\}$. Therefore, there exist $z_0 \in \R\backslash\supp\mu$ and $u_0 \in M$ with $F_{u_0}(z_0) \ne 0$.
    Finally, for a sequence $u \in M$ let
    \[ G_u(z) = (z - z_0) F_{D_{z_0}(u)}(z). \]
    Put
    \[ \widetilde{\mathcal{H}} = \{H_v = G_u + F_c \mid v = u + c \in \ell^2(\mu), u \in M, c \in L\}. \]
    It is clear that the set $\widetilde{\mathcal{H}}$ with the norm $H_v \mapsto ||v||_{\ell^2(\mu)}$ is a Hilbert space of entire functions that is isometrically isomorphic to $\ell^2(\mu)$. Since $L$ is dense in $\ell^2(\mu)$, we have $\mathcal{F}_L \subseteq \mathcal{H} \cap \widetilde{\mathcal{H}}$ is a dense subset of $\mathcal{H}$ and $\widetilde{\mathcal{H}}$. By Remark \ref{Remark} we do not need to check the equality of norms in axioms (H1) and (H3) for the space $\widetilde{\mathcal{H}}$.
    \newline
    \textbf{$\widetilde{\mathcal{H}}$ satisfies (H3):}
    \newline
    \indent For every $v \in \ell^2(\mu)$ we have $H_v^* = G_u^* + F_c^* = G_{\bar{u}} + F_{\bar{c}} \in \widetilde{\mathcal{H}}$, since $L$, $M$ are symmetric and $\mathcal{H}$ satisfies (H3).
    \newline
    \textbf{$\widetilde{\mathcal{H}}$ satisfies (H1):}
    \newline
    \indent If $v \in \ell^2(\mu)$ and $H_v(w) = 0$ for some $w \in \C\backslash\R$, then
    \begin{equation*}
        \begin{aligned}
            &H_v(z)(z - w)^{-1} = (G_u(z) + F_c(z))(z - w)^{-1} = \\
            &((z - z_0) F_{D_{z_0}(u)}(z) + F_c(z))(z - w)^{-1} = \\
            &F_{D_{z_0}(u)}(z) + ((w - z_0) F_{D_{z_0}(u)}(z) + F_c(z))(z - w)^{-1} = \\
            &F_{D_{z_0}(u)}(z) + F_{(w - z_0) D_{z_0}(u) + c}(z)(z - w)^{-1}.
        \end{aligned}
    \end{equation*}
    Furthermore, $F_{(w - z_0) D_{z_0}(u) + c}(w) = (w - z_0) F_{D_{z_0}(u)}(w) + F_c(w) = G_u(w) + F_c(w) = H_v(w) = 0$
    and $(w - z_0) D_{z_0}(u) + c \in L$ as $D_{z_0}(u), c \in L.$
    \newline
    Applying the property (H1) to $F_{(w - z_0) D_{z_0}(u) + c} \in \mathcal{F}_L \subseteq \mathcal{H}$ we have
    \[ F_{(w - z_0) D_{z_0}(u) + c}(\cdot)(\cdot - w)^{-1} = F_{D_w((w - z_0) D_{z_0}(u) + c)} \in \mathcal{F}_L \subseteq \widetilde{\mathcal{H}},\]
    therefore $H_v(\cdot)(\cdot - w)^{-1} \in \widetilde{\mathcal{H}}$.
    \newline
    \textbf{$\widetilde{\mathcal{H}}$ violates (H2):}
    \newline
    \indent Since $L$ is dense $\ell^2(\mu)$ there exists a sequence $\{c_n\} \subseteq L$ converging to $u_0$ in $\ell^2(\mu)$.
    The space $\mathcal{H}$ satisfies (H2), therefore $F_{c_n}(z_0) \to F_{u_0}(z_0) \ne 0$, but $G_{u_0} \in \widetilde{\mathcal{H}}$ vanishes at $z_0$, so $F_{c_n}(z_0) \nrightarrow G_{u_0}(z_0)$.
\end{proof}

\subsection{Construction of a de Branges space that is isometrically isomorphic to $\ell^2(\mu)$}\label{ConstructionOfXv}

Before establishing Theorem \ref{SecondTheorem}, we prove a technical Lemma using the following statement.

\begin{Th}[Mittag-Leffler]\label{MittagLeffler}
    Let $\{a_n\}$ be a discrete subset of the complex plane. For each natural $n$ let
    \[ p_n(z) = \sum_{k = 1}^{K_n} \frac{c_{n, k}}{(z - a_n)^k}. \]
    Then there exists a meromorphic function $f$ whose poles are precisely $\{a_n\}$ and for every $n \in \N$ $f - p_n$ has a removable singularity at $a_n$.
    Moreover, if $c_{n, k}$, $a_n$ $\in \R$ for all $n, k$, then $f$ may be chosen such that $f^* = f$.
\end{Th}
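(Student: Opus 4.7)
The plan is to run the standard Mittag--Leffler construction: decompose $f$ as a sum $\sum_n (p_n - P_n)$, where the $P_n$ are polynomial correctors chosen so the series converges uniformly on compact sets, and then observe that reality of the data forces the correctors to have real coefficients, from which $f^* = f$ follows.

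First I would use discreteness to enumerate the poles so that $|a_n| \to \infty$; in particular, at most finitely many $a_n$ lie in any given bounded set. For each $n$ with $a_n \neq 0$, the principal part $p_n$ is holomorphic on the open disk $|z| < |a_n|$, so it admits a Taylor expansion at the origin. I would let $P_n$ be a partial sum of this Taylor series of order high enough that
\[ \sup_{|z| \le |a_n|/2} |p_n(z) - P_n(z)| \le 2^{-n}. \]
When $a_n = 0$ (at most finitely many indices, which may be dealt with separately), I would set $P_n \equiv 0$.

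Next I would define $f(z) = \sum_n \bigl(p_n(z) - P_n(z)\bigr)$. For any compact $K \subset \mathbb{C}$ one has $K \subseteq \{|z| \le |a_n|/2\}$ for all sufficiently large $n$, and $K$ avoids all but finitely many $a_n$; thus, after discarding finitely many terms, the series is dominated by $\sum 2^{-n}$ on $K \setminus \bigcup_n \{a_n\}$ and converges uniformly there. Standard results on uniform limits of meromorphic functions then give that $f$ is meromorphic on $\mathbb{C}$. To identify the poles, fix $m$ and split the sum as $p_m + \sum_{n \neq m}(p_n - P_n) - P_m$; every term other than $p_m$ is holomorphic near $a_m$ (the first since the partial sum is finite, the tail by the uniform convergence on a small punctured disk around $a_m$), so $f - p_m$ has a removable singularity at $a_m$, and $f$ has no other poles.

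Finally, for the symmetry claim, I would observe that when all $a_n$ and $c_{n,k}$ are real, each $p_n$ has real Taylor coefficients at the origin, hence $P_n$ has real coefficients, hence $(p_n - P_n)^* = p_n - P_n$. Since $f^*$ can be computed by applying the involution term by term to a uniformly convergent series of $*$-invariant meromorphic functions, we conclude $f^* = f$. The only delicate point is the convergence bookkeeping handled in the second paragraph; everything else is formal, and I do not anticipate any substantive obstacle beyond choosing the truncation orders carefully enough to tame the growth of $p_n$ near its pole.
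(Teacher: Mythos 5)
Your argument is the classical Mittag--Leffler construction (subtract Taylor partial sums $P_n$ of $p_n$ about the origin to force normal convergence of $\sum_n (p_n - P_n)$), and it is correct; the paper itself quotes this theorem as a known result and offers no proof, so there is nothing to diverge from. The only point worth making explicit is the symmetry step, which you handle correctly: for real $a_n$ and real $c_{n,k}$ the function $p_n$ is a rational function with real coefficients, so its Taylor coefficients at $0$ are real, hence $(p_n - P_n)^* = p_n - P_n$, and $f^* = f$ follows by passing the (continuous) involution through the locally uniformly convergent series. Two trivial caveats: the conclusion that the poles are \emph{precisely} $\{a_n\}$ tacitly assumes each $p_n \not\equiv 0$ (otherwise $a_n$ is not a pole), and at most one index can have $a_n = 0$; neither affects the application in Lemma \ref{ExistenceOfS}, where the prescribed principal parts are simple poles with nonzero real residues at real points.
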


\begin{Lm}\label{ExistenceOfS}
    For every $\mu \in \mathcal{M}$ there exists an entire function $S$, vanishing precisely on $\supp\mu$, all zeroes of which are simple, satisfying $S^* = S$ and
    \[ \sum_{n \in \N} \frac{1}{|S'(t_n)|^2 \mu_n} < +\infty. \]
\end{Lm}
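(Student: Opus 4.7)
The plan is to build $S$ in two stages: first produce, by a Weierstrass product, a ``base'' entire function $S_0$ with the correct zero set and symmetry, and then correct the sizes $|S_0'(t_n)|$ by multiplying $S_0$ by $e^{g}$ for a carefully chosen entire function $g$ with $g^*=g$. Since $\supp\mu$ is a discrete subset of $\R$, the classical Weierstrass factorization theorem, applied with standard real elementary factors, supplies an entire $S_0$ whose zero set is exactly $\supp\mu$, each zero simple, and which satisfies $S_0^*=S_0$. In particular, $S_0$ is real on the real axis and $S_0'(t_n) \in \R \setminus \{0\}$ for every $n$.

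Next I would fix real numbers $b_n$ large enough to beat the obstruction in the target series; for instance
\[ b_n := \tfrac{1}{2}\bigl(n\log 2 - \log(|S_0'(t_n)|^2\mu_n)\bigr), \]
so that $e^{2b_n}|S_0'(t_n)|^2\mu_n = 2^n$. It then suffices to construct an entire $g$ with $g^*=g$ and $g(t_n)=b_n$ for every $n$. The trick is to seek $g$ in the form $g = f\cdot S_0$, where $f$ is meromorphic with simple poles at $\{t_n\}$. Near $t_n$ one has $S_0(z)=S_0'(t_n)(z-t_n)+O((z-t_n)^2)$, so prescribing $\mathrm{Res}_{t_n}f = b_n/S_0'(t_n)$ cancels the pole and forces $(fS_0)(z)\to b_n$ as $z\to t_n$. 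Because $b_n$ and $S_0'(t_n)$ are real, these principal parts have real coefficients, and Theorem \ref{MittagLeffler} delivers a meromorphic $f$ with the required principal parts and with $f^*=f$. Then $g:=fS_0$ is entire, $g^*=f^*S_0^*=fS_0=g$, and $g(t_n)=b_n$.

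Finally I would set $S:=e^{g}S_0$. Then $S$ is entire, $S^*=e^{g^*}S_0^*=e^{g}S_0=S$, and $S$ vanishes precisely on $\supp\mu$ with simple zeros (since $e^g$ has none). Differentiating at $t_n$, $S'(t_n)=e^{g(t_n)}S_0'(t_n)=e^{b_n}S_0'(t_n)$, so by the choice of $b_n$,
\[ \sum_{n \in \N}\frac{1}{|S'(t_n)|^2\mu_n}=\sum_{n \in \N}\frac{e^{-2b_n}}{|S_0'(t_n)|^2\mu_n}=\sum_{n \in \N}2^{-n}<+\infty. \]

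The main obstacle is the interpolation step: one cannot directly read off ``prescribed values'' from Mittag-Leffler, which only delivers prescribed principal parts. The ansatz $g=fS_0$ is what converts value-interpolation at $t_n$ into residue-prescription, and the reality of $S_0'(t_n)$ and $b_n$ is what lets us simultaneously preserve the symmetry $g^*=g$ needed for $S^*=S$. The remaining verifications (zero set, simplicity, summability) are bookkeeping once this ingredient is in place.
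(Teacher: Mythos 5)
Your proposal is correct and follows essentially the same route as the paper: a symmetric Weierstrass product for the zero set, followed by the ansatz $g=fS_0$ with Mittag--Leffler supplying real residues $b_n/S_0'(t_n)$ to interpolate the values $b_n$, and finally $S=e^gS_0$. The only cosmetic difference is that you hard-code the target $|S'(t_n)|^2\mu_n=2^n$, whereas the paper fixes an abstract real sequence $\{s_n\}$ with $\sum_n \mu_n^{-1}|s_n|^{-2}<+\infty$ and arranges $|S'(t_n)|=|s_n|$.
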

\begin{proof}
    Pick $\{s_n\}$ -- a sequence of non-zero real numbers with $\sum_n \mu_n^{-1} |s_n|^{-2} < +\infty.$
    Let $\Pi$ be the Weierstrass canonical product for the sequence $\{t_n\},$ then $\Pi = \Pi^*$ is an entire function whose zero set coincides with $\supp\mu$.
    For each $n \in \N$ define $p_n(z) = c_n (z - t_n)^{-1}$, where $c_n = \Pi'(t_n)^{-1} \log|s_n / \Pi'(t_n)| \in \R$.
    \newline
    \indent By Theorem \ref{MittagLeffler} there exists a meromorphic function $f$ such that $f^* = f$ and $f$ having the residue $c_n$ at the point $t_n$ for every $n \in \N$.
    \newline
    \indent Let $T = \Pi f$, then $T$ is an entire function satisfying $T^* = T$ and $T(t_n) = \log|s_n / \Pi'(t_n)|$.
    Finally, put $S = \Pi e^T$, then $|S'(t_n)| = |s_n|$ and $S^* = \Pi^* e^{T^*} = S$.
\end{proof}

\begin{proof}[Proof of Theorem \ref{SecondTheorem}]
    Fix an entire function $S$ satisfying the conclusions of Lemma \ref{ExistenceOfS} for the measure $\mu$.
    Put
    \[ \mathcal{H} = \braces*{X_v(z) = \sum_{n \in \N} v_n \frac{S(z)}{z - t_n} \frac{1}{S'(t_n)} \ \bigg|\ v \in \ell^2(\mu)}. \]
    We shall verify that the space $\mathcal{H}$ satisfies the de Branges axioms (H1) -- (H3), then by Theorem \ref{EquivalenceOfDefinitions} $\mathcal{H}$ is a de Branges space.
    \newline
    \textbf{$\mathcal{H}$ is a Hilbert space of entire functions and is isometrically isomorphic to $\ell^2(\mu)$:}
    \newline
    \indent Let $K$ be a compact subset of the complex plane.
    Since $\supp\mu$ is a discrete set,
    $\max\limits_{n \in \N} \max\limits_{z \in K} |S(z)(z - t_n)^{-1}| \le C_K < +\infty$, therefore
    \[ \sup_{z \in K} |X_v(z)| \le C_K \sum_{n \in \N} |v_n| \sqrt{\mu_n} \frac{1}{|S'(t_n)| \sqrt{\mu_n}} \le C_K ||v||_{\ell^2(\mu)} \sqrt{\sum_{n \in \N} \frac{1}{|S'(t_n)|^2 \mu_n}} < +\infty. \]
    By the Weierstrass M-test, the series converges uniformly on $K$, so $X_v$ is continuous.
    Moreover, for a triangle $\gamma \subseteq \C$ we have
    \[
        \oint\limits_\gamma X_v(z) \dd z = \oint\limits_\gamma \sum_{n \in \N} v_n \frac{S(z)}{z - t_n} \frac{1}{S'(t_n)} \dd z =
        \sum_{n \in \N} \frac{v_n}{S'(t_n)} \oint\limits_\gamma \frac{S(z)}{z - t_n} \dd z = 0,
    \]
    so by Morera's Theorem $X_v$ is entire.
    \newline
    Notice that $X_v(t_n) = v_n$, so $X_v(t_n) = 0 \ \forall n$ iff $X_v \equiv 0$, therefore $X_v \mapsto ||v||_{\ell^2(\mu)}$ is a norm on $\mathcal{H}$, which is isometrically isomorphic to $\ell^2(\mu)$ by definition.
    \newline
    \textbf{$\mathcal{H}$ satisfies (H3):}
    \newline
    \indent Direct computation shows that $X_v^* = X_{\bar{v}} \in \mathcal{H}$, since $\bar{v} \in \ell^2(\mu)$.
    \newline
    \textbf{$\mathcal{H}$ satisfies (H1):}
    \newline
    \indent Suppose that $X_v(w) = 0$ for some $w \in \C\backslash\R$. Then $S(w) \ne 0$ and
        \begin{multline*}
            X_{D_w(v)}(z) = \sum_{n \in \N} v_n \frac{S(z)}{(z - t_n)(t_n - w)} \frac{1}{S'(t_n)} = \\
            \frac{1}{z - w}\parens*{\sum_{n \in \N} v_n \frac{S(z)}{z - t_n} \frac{1}{S'(t_n)} - S(z) \sum_{n \in \N} v_n \frac{1}{w - t_n} \frac{1}{S'(t_n)}} =
            \frac{X_v(z)}{z - w} - \cancelto{0}{\frac{S(z)}{(z - w) S(w)} X_v(w)}.
        \end{multline*}
    \newline
    \textbf{$\mathcal{H}$ satisfies (H2):}
    \newline
    \indent If $z = t_m$ for some $m \in \N$, then $|X_v(t_m)| \le \sqrt{\mu_m}^{-1} ||v||_{\ell^2(\mu)}$. Otherwise,
        \[ |X_v(z)| = \bigg|\sum_{n \in \N} v_n \sqrt{\mu_n} \frac{S(z)}{z - t_n} \frac{1}{S'(t_n) \sqrt{\mu_n}}\bigg| \le
            ||v||_{\ell^2(\mu)} |S(z)| \max_n |z - t_n|^{-1} \sqrt{\sum_{n \in \N} \frac{1}{|S'(t_n)|^2 \mu_n}}. \]
\end{proof}

\section{Acknowledgements}
The author is deeply grateful to Vladimir Kapustin for stating the initial problem, and to Roman Romanov for providing helpful suggestions and pointing out the connection between the author's undergraduate thesis and the de Branges conjecture.

\bibliographystyle{elsarticle-num-names}
\bibliography{bibliography}

\end{document}